\renewcommand\subsection{\@startsection{subsection}{2}%
  \z@{.5\linespacing\@plus\linespacing}{.5\linespacing}%
  {\normalfont\bfseries}}
\newtheorem{lemma}{Lemma}
\newtheorem{cor}[lemma]{Corollary}
\newtheorem{thm}[lemma]{Theorem}
\newtheorem{example}[lemma]{Example}
\newtheorem{thm?}[lemma]{Theorem?}
\newtheorem{remark}[lemma]{Remark}
\newcommand{\ra}{\ensuremath{\rightarrow}}
\newcommand{\Z}{\mathbb{Z}}
\newcommand{\R}{\mathbb{R}}
\newcommand{\PP}{\mathbb{P}}
\newcommand{\C}{\mathbb{C}}
\newcommand{\Q}{\mathbb{Q}}
\newcommand{\OO}{\mathcal{O}}
\newcommand{\Pic}{\operatorname{Pic}}
\begin{document}

\begin{abstract}
Let $D > 546$ be the discriminant of an indefinite rational quaternion algebra.  We show that there are infinitely many 
imaginary quadratic fields $l/\Q$ such that the twist of the Shimura curve $X^D$ by the main Atkin-Lehner involution 
$w_D$ and $l/\Q$ violates the Hasse Principle over $\Q$.
\end{abstract}
\title{Hasse Principle Violations for Atkin-Lehner Twists of Shimura Curves}
\author{Pete L. Clark}
\address{University of Georgia}
\email{plclark@gmail.com}
\author{James Stankewicz}
\address{University of Bristol/Heilbronn Institute for Mathematical Research}
\email{stankewicz@gmail.com}

\maketitle

\section{Introduction}
\noindent
For a number field $k$, we denote by $\mathbb{A}_k$ the adele ring of $k$.
\\ \indent
Let $l/k$ be a quadratic field extension, let $V_{/k}$ be a smooth, projective, geometrically integral variety, and let $\iota_{/k}$ 
be an involution of $V$.  We denote by $\mathcal{T}(V,\iota,l/k)$ the \textbf{quadratic twist} of $V$ by $\iota$ and the quadratic 
extension $l/k$.
We view $V_{/k}$ itself as the ``trivial quadratic twist'' of $V_{/k}$ corresponding to the 
``trivial quadratic extension $k/k$''.  This convention may seem a bit strange at first, but it turns out to be natural: e.g. it makes ``is a quadratic twist'' into an equivalence relation on varieties $V_{/k}$.  We denote by $V/\iota$ the quotient under the action of the group $\{1,\iota\}$.  
\\ \indent
For a curve $X_{/\Q}$ and a $\Q$-rational involution $\iota$, if $\mathcal{P}$ 
is a property of algebraic curves $C_{/\Q}$, then, inspired by terminology of \cite[$\S$1.1]{Mazur-Rubin10} we say \textbf{many quadratic twists of $(X,\iota)$ have property $\mathcal{P}$} if there is $\alpha  \geq 0$ such 
that as $X \ra \infty$ we have 
\[ \# \{ \text{squarefree $d$ with }|d| \leq X \mid \mathcal{T}(X,\iota,\Q(\sqrt{d})/\Q) \text{ has property } \mathcal{P} \} \gg \frac{X}{\log^{\alpha} X}. \]
Let $D > 1$ be a squarefree integer which is a product of an even number of primes.  Let $B/\Q$ be the (unique, up to isomorphism) 
nonsplit indefinite quaternion algebra with reduced discriminant $D$.   Let $(X^D)_{/\Q}$ be the associated Shimura curve, and let $w_D$ be the main Atkin-Lehner involution of $(X^D)_{/\Q}$ (see e.g. \cite[$\S$ 0.3.1]{Clark03}). \\ \indent 
In this note we provide a complement to prior work of the first author \cite{Clark08}, \cite{Clark09}, \cite{ClarkXX} 
using prior work of the second author \cite{Stankewicz14}.  Here is our main result.

\begin{thm}
\label{MAINTHM}
Suppose that the genus of $X^D/w_D$ is at least $2$.  (The set of all such $D$ is given in Lemma \ref{PRELEMMA2} and includes 
all $D > 546$.)  Then many quadratic twists of $(X^D,w_D)$ violate the Hasse Principle.
\end{thm}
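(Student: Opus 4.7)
The plan is to follow the standard Faltings-plus-local-analysis strategy for producing Hasse Principle violations on Atkin--Lehner twists. For the global obstruction, suppose $P \in \mathcal{T}(X^D,w_D,l/\Q)(\Q)$ with $l/\Q$ a nontrivial quadratic extension; unwinding the definition of the twist, this is equivalent to a point $P \in X^D(l) \setminus X^D(\Q)$ with $\sigma(P) = w_D(P)$, where $\sigma$ generates $\Gal(l/\Q)$. The image $\bar P$ of $P$ in $(X^D/w_D)(\Q)$ is then a non-cuspidal $\Q$-rational point, and any such $\bar P$ can arise from at most one nontrivial quadratic extension (the field of definition of $P$ is recovered from $\bar P$). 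The hypothesis that $X^D/w_D$ has genus at least $2$ lets us invoke Faltings' theorem to conclude $\#(X^D/w_D)(\Q) < \infty$, so only finitely many $l$ produce twists with any $\Q$-rational point.

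For the local side, I would show that a positive-density family of imaginary quadratic $l = \Q(\sqrt{d})$ gives twists with $\Q_v$-points at every place $v$. At $v = \infty$, since $l \otimes_\Q \R = \C$, local solvability reduces to the existence of a fixed point of $w_D \circ c$ on $X^D(\C)$, a single topological condition independent of $d$ that must be verified once from the description of $X^D$. At primes $p \nmid D$, the curve has good reduction; for $p$ exceeding an effective bound depending on the genus of $X^D$, the Hasse--Weil inequality and Hensel's lemma deliver $\Q_p$-points of the twist unconditionally, while the finitely many smaller primes contribute only a nonempty congruence restriction on $d$. At primes $p \mid D$, the curve $X^D$ itself has no $\Q_p$-points (by the Cerednik--Drinfeld uniformization), so local solvability of the twist is genuinely nontrivial; this is where \cite{Stankewicz14} enters, characterizing the quadratic extensions $l_p/\Q_p$ for which the twist gains a $\Q_p$-point and guaranteeing that the induced splitting/ramification condition on $d$ at $p$ is nonempty.

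Each of the finitely many local conditions above cuts out a nonempty union of residue classes for $d$ modulo a fixed modulus $N$, which combined with the sign condition $d < 0$ describes an admissible set of imaginary quadratic fields. A standard squarefree sieve then produces $\gg X$ admissible $d$ with $|d| \le X$, so $\alpha = 0$ suffices in the definition of ``many''. Removing the finitely many $d$ coming from the Faltings step leaves a set of the same order, which proves the theorem.

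The hardest step is the local analysis at the primes $p \mid D$: because $X^D(\Q_p)$ is empty, the local $\Q_p$-points of the twist must arise from genuinely quadratic arithmetic of the definite quaternion order of discriminant $D/p$ via the Cerednik--Drinfeld description of $X^D_{/\Q_p}$. Happily, \cite{Stankewicz14} carries out precisely this analysis and guarantees nonemptiness of the relevant local condition; the role of the present theorem is to package that input together with Faltings and an elementary density count into a genuine Hasse Principle violation statement.
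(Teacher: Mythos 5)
Your global step is fine: the descent through $X^D \to X^D/w_D$ together with Faltings (using genus $\geq 2$) and the absence of $\Q$-rational $w_D$-fixed points (itself a consequence of $X^D(\R) = \varnothing$) does show that only finitely many quadratic twists of $(X^D,w_D)$ have $\Q$-rational points; this is essentially the paper's Theorem~\ref{TAHPIII}a). Your treatment of the archimedean place is also correct.

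The gap is in the sieve. You assert that the local solvability conditions ``cut out a nonempty union of residue classes for $d$ modulo a fixed modulus $N$,'' yielding $\gg X$ admissible $d$ and hence $\alpha = 0$. This is false, and the error is localized at the primes $p \mid d$ with $p \nmid 2D$ --- a case your proposal never actually handles. For such a prime, the twist $Y_d = \mathcal{T}(X^D,w_D,\Q(\sqrt{d})/\Q)$ has \emph{bad} reduction at $p$ (since $p$ ramifies in $\Q(\sqrt{d})$), so ``$X^D$ has good reduction at $p \nmid D$'' plus Hasse--Weil plus Hensel does not apply. What \cite{Stankewicz14} shows (Thm.~4.1) is that $Y_d(\Q_p) \neq \varnothing$ for a ramified prime $p$ forces $p$ itself to lie in an explicit Chebotarev set (roughly: $p$ inert in $\Q(\sqrt{-D})$, or else $p$ splits suitably in the ring class fields of discriminant $-D$ or $-4D$). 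That is a condition on the prime $p$, not a congruence on $d$ modulo a fixed $N$. Consequently every prime factor of an admissible $d$ must lie in a set of primes of density strictly less than $1$, so the admissible $d$ have density zero. Indeed the paper proves exactly this: Theorem~\ref{ANALYTICTHM}b) gives $\mathfrak{D}_D(X) = O\!\left(X/\log^{5/8}X\right)$, which directly contradicts your $\gg X$ count. So $\alpha = 0$ cannot work; the best the paper obtains from the explicit family $\eta_D$ is $\gg X/\log^{1-2^{-e_D}}X$, and from the soft argument $\gg X/\log X$.

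The paper also routes the quantitative step differently. Rather than sieving directly for admissible $d$, it first exhibits a single locally soluble twist $Y = \mathcal{T}(X^D,w_D,l_0/\Q)$ (this is where the hard local work at $p\mid D$, $p\mid d$, and small $p$ enters), and then applies the earlier Twist Anti-Hasse Principle \cite[Thm.~2]{Clark08} to that $Y$, which already produces a positive-density set of primes $p$ with $\mathcal{T}(Y,\iota,\Q(\sqrt{p})/\Q)$ violating the Hasse Principle, hence $\gg X/\log X$ twists of the original $(X^D,w_D)$. Your more direct sieve would need to account for the prime-level constraints at $p\mid d$ and would land at a power of $\log$ in the denominator, not at $\alpha = 0$.
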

\noindent
Let us provide some context.  It is a fundamental problem to find projective varieties $V$ defined over a number field $k$ which violate the Hasse Principle -- i.e., $V(k) = \varnothing$, while for every completion $k_v$ of $k$ we have $V(k_v) \neq \varnothing$ (equivalently, $V(\mathbb{A}_k) \neq \varnothing$). It is also desirable to understand when these violations are explained by the Brauer-Manin obstruction, as is conjectured to hold whenever 
$V$ is a curve \cite[Conjecture 5.1]{Poonen}.  There is a large literature on Hasse Principle violations for curves $V_{/Q}$.  Most examples are sporadic in nature: they apply to one curve at a time. 
\\ \indent
Past work of the first author \cite[Thm. 2]{Clark08} and \cite[Thm. 1]{ClarkXX} gives two versions of a ``Twist Anti-Hasse Principle (TAHP).'' Each gives hypotheses under which for a curve $X$ defined over a number field $k$ and a $k$-rational involution $\iota: X \ra X$, there are infinitely many quadratic field extensions $l/k$ such that $\mathcal{T}(X,\iota,l/k)$ violates the Hasse Principle.   
 As an application, 
it was shown \cite[Thm. 1]{Clark08} that for all squarefree $N > 163$, there are infinitely many primes $p \equiv 1 \pmod{4}$ 
such that $\mathcal{T}(X_0(N),w_N,\Q(\sqrt{p})/\Q))$ violates the Hasse Principle.

\begin{remark}
Bhargava, Gross and Wang show that for each $g \geq 1$, when genus $g$ hyperelliptic curves 
$V_{/Q}$ are ordered by height, a positive proportion violate the Hasse Principle and this violation is explained by Brauer-Manin 
\cite[Thm. 1.1]{BGW13}.  
This is complementary to the Hasse Principle violations obtained 
using TAHP.  One hypothesis of TAHP is the finiteness of $(X/\iota)(k)$, so TAHP does 
not apply to a pair $(X,\iota)$ when $\iota$ is a hyperelliptic involution.  Yet TAHP produces
infinite families of curves violating the Hasse Principle in which the gonality approaches infinity.
\end{remark}
\noindent
It is also natural to pursue Hasse Principle violations on Atkin-Lehner twists of Shimura curves.  However, 
for $D > 1$ the Shimura curve $X^D$ has $X^D(\R) = \varnothing$.  In both previous versions of 
the Twist Anti-Hasse Principle, one of the hypotheses on $X_{/k}$ was $X(\mathbb{A}_k) \neq \varnothing$, so 
the Shimura curves $X^D$ could not be treated directly: instead, \cite[Thm. 3]{Clark08} gives Hasse Principle violations 
on quadratic twists of the Atkin-Lehner quotient $X^D/w_D$ by an Atkin-Lehner involution $w_d$ for $d \neq D$.  \\ \indent
Here we give a further variant of the Twist Anti-Hasse Principle in which the hypothesis 
$X(\mathbb{A}_k) \neq \varnothing$ is weakened to:  
$\mathcal{T}(X,\iota,l/k)(\mathbb{A}_k) \neq \varnothing$ for some quadratic extension $l/\Q$.  This is a quick consequence of \cite[Thm. 1]{ClarkXX}.  To apply our new criterion we need to know that some Atkin-Lehner twist of $X^D_{/\Q}$ has points everywhere locally.  Certainly this is \emph{often} true: whenever the corresponding quaternion algebra $B/\Q$ is split by some 
class number one imaginary quadratic field $l$ with discriminant prime to $D$, then $\mathcal{T}(X^D,w_D,l/\Q)$ has a 
$\Q$-rational CM point \cite[Prop. 65b)]{Clark03}.  That it is true for all $D$ lies deeper and makes use of the results of 
\cite{Stankewicz14}.  
\\ \indent
We give the proof of Theorem \ref{MAINTHM} in $\S$2.  Let \[\mathfrak{D}_D = \{\text{squarefree } d \in \Z \mid 
\mathcal{T}(X^D,w_D,\Q(\sqrt{d})/\Q) \text{ violates the Hasse Principle}\}, \] and for $X \geq 1$, let $\mathfrak{D}_D(X) 
= \# \mathfrak{D}_D \cap [-X,X] = \mathfrak{D}_D \cap [-X,-1]$.  Our proof of Theorem \ref{MAINTHM} shows that $\mathfrak{D}_D(X) \gg \frac{X}{\log X}$.  
In $\S$3 we give a more precise asymptotic analysis of $\mathfrak{D}_D(X)$.  We show first that $\mathfrak{D}_D(X) \gg \frac{X}{\log^{e_D} X}$ for an explicit $e_D < 1$ and then that $\mathfrak{D}_D(X) = O(\frac{X}{\log^{5/8} X})$: in particular
$\mathfrak{D}_D$ has density $0$.  In $\S$4 we discuss relations between our results 
and Hasse Principle violations over quadratic extensions, both in general and with particular attention to the case $(X^D)_{/\Q}$.

\section{The Proof}

\subsection{Another Twist Anti-Hasse Principle}

\begin{thm}(Twist Anti-Hasse Principle, v. III)
\label{TAHPIII}
Let $k$ be a number field.  Let $X_{/k}$ be a smooth, 
projective geometrically integral curve, and let $\iota: X \ra X$ be a $k$-rational automorphism of order $2$.  Let 
$X/\iota$ be the quotient of $X$ under the action of the group $\{e,\iota\}$.  We suppose all of the following: \\
(i) We have $\{ P \in X(k) \mid \iota(P) = P\} = \varnothing$.  \\
(ii) We have $\{P \in X(\overline{k}) \mid \iota(P) = P \} \neq \varnothing \}$.  \\
(iii) We have $\mathcal{T}(X,\iota,l_0/k)(\mathbb{A}_k) \neq \varnothing$ for some quadratic extension $l_0/k$.  \\
(iv) The set $(X/\iota)(k)$ is finite.  \\
Then: \\
a) For all but finitely many quadratic extensions $l/k$, the twisted curve $\mathcal{T}(X,\iota,l/k)_{/k}$ has no $k$-rational points.  \\
b) There are infinitely many quadratic extensions $l/k$ such that the twisted curve $\mathcal{T}(X,\iota,l/k)$ violates 
the Hasse Principle over $k$. \\
c) When $k = \Q$, as $X \ra \infty$, the number of squarefree integers $d$ with $|d| \leq X$ such that  $\mathcal{T}(X,\iota,\Q(\sqrt{d})/\Q)$ violates the Hasse Principle is $\gg \frac{X}{\log X}$.
\end{thm}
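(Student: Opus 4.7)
The plan is to reduce to the earlier Twist Anti-Hasse Principle of \cite[Thm. 1]{ClarkXX}, in which the hypothesis $X(\mathbb{A}_k) \neq \varnothing$ appears directly, by replacing the pair $(X,\iota)$ with a quadratic twist that already has points everywhere locally. Concretely, I would set $X' := \mathcal{T}(X,\iota,l_0/k)$ and let $\iota'$ be the induced $k$-rational involution on $X'$; hypothesis (iii) then gives $X'(\mathbb{A}_k) \neq \varnothing$ for free.

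Next I would check that the remaining hypotheses of the earlier TAHP hold for $(X',\iota')$. Hypothesis (ii) is geometric and is preserved under twisting, because there is a canonical $\overline{k}$-isomorphism $X_{\overline{k}} \cong X'_{\overline{k}}$ intertwining $\iota$ and $\iota'$ and thus matching the fixed loci. The quotient $X'/\iota'$ is canonically $k$-isomorphic to $X/\iota$ (twisting by an involution acts trivially on the $\iota$-quotient), so (iv) passes over. Hypothesis (i) is the one point requiring an argument: if $P \in X'(k)$ were a fixed point of $\iota'$, then its image under the canonical $\overline{k}$-isomorphism $X'_{\overline{k}} \cong X_{\overline{k}}$ would be fixed by $\iota$, and on this fixed point the twisted Galois action $\sigma \mapsto \iota^{\chi(\sigma)} \circ \sigma$ (where $\chi$ is the quadratic character cutting out $l_0/k$) coincides with the untwisted action; hence $P$ would descend to a $k$-rational fixed point of $\iota$ on $X$, contradicting (i).

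Finally I would translate the conclusions back to $(X,\iota)$ via the standard bijection on quadratic twists: for each quadratic extension $m/k$ one has $\mathcal{T}(X',\iota',m/k) \cong \mathcal{T}(X,\iota,l/k)$, where $l$ is the extension whose class in $H^1(k,\Z/2\Z) = k^{\times}/(k^{\times})^2$ is the product of the classes of $l_0$ and $m$ (with the trivial extension allowed, per the convention fixed in the introduction). This reindexing is a bijection on the set of quadratic extensions (including the trivial one), so conclusions (a) and (b) for $(X',\iota')$ transfer verbatim to $(X,\iota)$. For (c) over $\Q$, writing $l_0 = \Q(\sqrt{d_0})$, the map sending a squarefree $m$ to the squarefree part $d$ of $m \cdot d_0$ satisfies $|m|/|d_0| \leq |d| \leq |d_0|\,|m|$; the lower bound $\gg X/\log X$ from the earlier TAHP then transfers, with the implied constant absorbing the factor $|d_0|$. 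I expect the only real subtlety to be the verification that (i) passes to the twist, which I treat above; everything else is essentially bookkeeping about how the set of quadratic twists reparametrizes under a fixed twist by $l_0$.
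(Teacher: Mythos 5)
Your proposal is correct and takes essentially the same route as the paper: set $Y = \mathcal{T}(X,\iota,l_0/k)$, observe that (iii) gives $Y(\mathbb{A}_k) \neq \varnothing$, check that (i), (ii), (iv) descend to $Y$ (with $Y/\iota \cong X/\iota$ canonically), and apply \cite[Thm.~1]{ClarkXX} (and for part (c), \cite[Thm.~2]{Clark08}) to $Y$, then reindex the set of quadratic twists. Your verification that (i) transfers — via the observation that the twisted Galois cocycle acts trivially on $\iota$-fixed points — is somewhat more explicit than the paper's one-line remark, but it is the same content; the only other (inessential) difference is that the paper also gives a direct argument for part (a) from hypotheses (i) and (iv) alone, whereas you obtain it by transfer.
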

\begin{proof}
Let $l/k$ be a quadratic extension, and put 
Let $Y = \mathcal{T}(X,\iota,l/k)$.  Then $\iota$ 
defines a $k$-rational involution on $Y$: indeed, $Y_{/l} \cong X_{/l}$, and if $\sigma$ is the nontrivial field automorphism 
of $l/k$, then $\sigma$ acts on $Y(l) = X(l)$ by $\sigma^* P = \iota(\sigma(P))$, so for all $P \in Y(A)$ and for all $k$-algebras $A$ we have 
\[ \sigma^* \iota (\sigma^*)^{-1} = \iota (\sigma \iota \sigma^{-1}) \iota^{-1} = \iota \iota \iota^{-1} = \iota. \]
The curve $Y/\iota$ is canonically isomorphic to $X/\iota$.  Thus for each quadratic extension $l/k$ -- including the trivial quadratic extension $k/k$ -- there is a map 
\[ \psi_{l}: \mathcal{T}(X,\iota,l/k)(k) \rightarrow (X/\iota)(k). \]
We have 
\begin{equation}
\label{QUADDESCENT1}
 (X/\iota)(k) = \bigcup_{l/k} \psi_{l}( \mathcal{T}(X,\iota,l/k))(l): 
\end{equation}
indeed, if $P \in (X/\iota)(k)$, then $q^*(P) = [Q_1] + [Q_2]$ is an effective $k$-rational divisor of degree $2$.  If $Q_1 = Q_2$, 
then $Q_1$ is a $k$-rational $\iota$-fixed point.  In this case $Q_1$ is also a $k$-rational point of $\mathcal{T}(X,\iota,l/k)$ 
for all quadratic extensions $l/k$.  Otherwise, $Q_1 \neq Q_2 = \iota(Q_1)$, and the Galois action on $\{Q_1,Q_2\}$ determines a unique quadratic extension $l/k$ (possibly the trivial one) such 
that $Q_1,Q_2 \in \mathcal{T}(X,\iota,l/k)$.  This shows (\ref{QUADDESCENT1}) and also shows that under our hypothesis (i) 
the union in (\ref{QUADDESCENT1}) is disjoint.   \\
a) For this part we need only assume hypotheses (i) and (iv).  Let $(X/\iota)(k) = \{P_1,\ldots,P_n\}$.  By hypothesis (i), 
each $P_i$ is either of the form $\psi(Q)$ for $Q \in X(k)$ or $\psi_{\ell_i}(Q)$ for a unique quadratic extension $l_i/k$.  Thus 
there are at most $n$ quadratic extensions $l/k$ such that $\mathcal{T}(X,\iota,l/k)(k) \neq \varnothing$.  \\
b) If we replace (iii) by the hypothesis (iii$'$) $X(\mathbb{A}_k) \neq \varnothing$, then we get (a slightly simplified statement of)
\cite[Thm. 1]{ClarkXX}.   Suppose now that $X(\mathbb{A}_k) = \varnothing$ but for some nontrivial quadratic extension $l_0/k$ 
we have $\mathcal{T}(X,\iota,l_0/k)(\mathbb{A}_k) \neq \varnothing$.  Put $Y = \mathcal{T}(X,\iota,l_0/k)$.    
The canonical bijection $X(\overline{k}) \ra Y(\overline{k})$ induces bijections on the sets of $\iota$-fixed points and of $k$-rational $\iota$-fixed points, so conditions (i) and (ii) hold for $Y$.  By our assumption, hypothesis (iii$'$) holds for $Y$.   And as above we have a canonical isomorphism $(X/\iota) \ra (Y/\iota)$.   So we may apply \cite[Thm. 1]{ClarkXX} to $Y$ in place of $X$, getting the conclusion 
that infinitely many quadratic twists of $(Y,\iota)$ -- equivalently, of $(X,\iota)$ -- violate the Hasse Principle over $k$. \\
c) Similarly, if $k = \Q$ and we replace (iii) by (iii$'$) $X(\mathbb{A}_{\Q}) \neq \varnothing$, then we may apply 
\cite[Thm. 2]{Clark08} to get that the set of prime numbers $p$ such that $\mathcal{T}(X,\iota,\Q(\sqrt{p})/\Q)$ violates 
the Hasse Principle has positive density, and thus that the number of quadratic twists by squarefree $d$ with $|d| \leq X$ 
is $\gg \frac{X}{\log X}$.  This conclusion applies to some quadratic twist $Y = \mathcal{T}(X,\iota,\Q(\sqrt{d_0})/\Q)$ and 
thus it also applies (with a different value of the suppressed constant) to $X$.   
\end{proof}


\subsection{Preliminaries on Shimura Curves}

\begin{thm}
\label{THESISTHMS}
a) (Shimura \cite{Shimura75}) We have $X^D(\R) = \varnothing$.  \\
b) (Clark \cite[Main Theorem 2]{Clark03})
We have $(X^D/w_D)(\mathbb{A}_\Q) \neq \varnothing$.
\end{thm}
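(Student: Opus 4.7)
Part (a) is due to Shimura; the plan is to reprise his argument via the moduli interpretation. We have $X^D(\C) = \Gamma_D \backslash \mathcal{H}$, where $\Gamma_D$ is the image in $\PSL_2(\R)$ of the norm-one units of a maximal order $\OO_B \subset B$; equivalently, $X^D$ parametrizes polarized abelian surfaces $A$ equipped with an embedding $\OO_B \hookrightarrow \End(A)$. A real point of $X^D$ would correspond to such a pair defined over $\R$. The Rosati involution coming from the polarization must be positive, and for $D > 1$ this rules out the required compatibility between complex conjugation and the $\OO_B$-action; equivalently on the analytic side, no element of $\OO_B$ of reduced norm $-1$ induces the swap $\tau \mapsto \bar\tau$ modulo $\Gamma_D$. (By contrast, for $D = 1$ one recovers the modular curve, whose real locus is nonempty.)

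For part (b), one must exhibit local points at every place of $\Q$. At the archimedean place, real points of $X^D/w_D$ correspond to $\tau \in \mathcal{H}$ with $w_D \cdot \tau = \bar\tau$; such $\tau$ are CM points by an imaginary quadratic order whose embedding into $\OO_B$ is compatible with the Atkin-Lehner element, and their existence reduces to local-global conditions on embeddings of quadratic orders into quaternion orders. At a finite prime $p \nmid D$, the curve $X^D$ has good reduction, so $X^D/w_D$ does as well; the Weil bounds, combined with the Deuring-Eichler mass formula for the sporadic small primes, furnish an $\F_p$-point, which Hensel then lifts to a $\Q_p$-point on $X^D/w_D$.

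At a prime $p \mid D$, the plan is to use the Cerednik-Drinfeld $p$-adic uniformization of $X^D$ as a Mumford curve attached to the definite quaternion algebra of reduced discriminant $D/p$, with the action of $w_D$ realized explicitly on the associated Bruhat-Tits tree. The quotient $(X^D/w_D)_{\Q_p}$ inherits a uniformization, and $\Q_p$-points on it should be produced from fixed vertices and edges of the tree modulo the uniformizing group, possibly after a quadratic base change followed by descent. The main obstacle is precisely this case $p \mid D$: it requires the full Cerednik-Drinfeld machinery together with a careful verification, across every $D$ and every $p \mid D$, that the uniformization always yields a $\Q_p$-point on the Atkin-Lehner quotient. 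This is the content of Clark's Main Theorem 2, which we cite rather than reprove.
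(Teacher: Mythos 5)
This theorem is not proved in the paper at all: both parts are stated as citations, part (a) to Shimura \cite{Shimura75} and part (b) to Clark \cite[Main Theorem 2]{Clark03}, and the authors give no argument beyond the attribution. Your proposal therefore cannot be compared to a proof in the paper, because there isn't one; what you have written is a rough sketch of the arguments found in the \emph{cited} sources, and you yourself fall back on citing Clark's Main Theorem 2 for the hard case $p \mid D$.

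As a sketch of those external arguments, your outline has the right architecture but is loose in a few places that matter. For part (a), the subtlety in Shimura's theorem is that the real structure on $X^D$ coming from the canonical model is \emph{not} the naive antiholomorphic involution $\tau \mapsto \bar\tau$ on $\Gamma_D \backslash \mathcal{H}$; it is a twist of it by an element of $B^\times$ of negative reduced norm, and the nonexistence of real points comes from an analysis of this twisted involution (or equivalently, on the moduli side, from the incompatibility of the $\OO_B$-action with complex conjugation on a QM abelian surface over $\R$). Your phrasing ``$w_D \cdot \tau = \bar\tau$'' for real points of $X^D/w_D$ inherits the same imprecision. For part (b), at the good primes the Weil bound handles $p \gg g^2$, but for small good primes the existence of $\F_p$-points on $X^D/w_D$ does not follow from a mass formula in any simple way; Clark's argument uses supersingular points and a careful analysis of the mod-$p$ fiber, and this deserves more than a passing mention. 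The Cerednik--Drinfeld strategy for $p \mid D$ is correct in outline. None of this is a flaw in the paper under review, which legitimately treats Theorem \ref{THESISTHMS} as an input; if you want to understand the proofs you should work through \cite{Shimura75} and \cite[Ch.~3]{Clark03} directly rather than reconstruct them from the uniformization heuristics.
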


\begin{cor}
\label{RCOR}
Let $d \in \Q^{\times} \setminus \Q^{\times 2}$, and let $Y = \mathcal{T}(X^D,w_D,\Q(\sqrt{d})/\Q)$.  Then 
$Y(\R) = \varnothing \iff d > 0$.
\end{cor}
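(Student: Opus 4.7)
The plan is to reduce both implications to the behavior of $Y$ after base change to $\R$, which depends only on whether the real place of $\Q$ splits in $\Q(\sqrt{d})/\Q$.

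For the forward direction ($Y(\R) = \varnothing \Rightarrow d > 0$), I would show the contrapositive. If $d > 0$, then $\Q(\sqrt{d})/\Q$ splits at the real place, so $\Q(\sqrt{d}) \otimes_\Q \R \cong \R \times \R$ and the quadratic twist becomes trivial after base change: $Y_{/\R} \cong X^D_{/\R}$. Theorem \ref{THESISTHMS}(a) then gives $Y(\R) = X^D(\R) = \varnothing$.

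For the converse, suppose $d < 0$, so $\Q(\sqrt{d}) \otimes_\Q \R \cong \C$ and $Y_{/\R} \cong \mathcal{T}(X^D_{/\R}, w_D, \C/\R)$. The plan is to produce a real point of $Y$ from a real point of the quotient $X^D/w_D$, which exists by Theorem \ref{THESISTHMS}(b). Given $P \in (X^D/w_D)(\R)$, pull back along the quotient map $q$ to write $q^*(P) = [Q_1] + [Q_2]$ as an effective $\R$-rational divisor of degree $2$ on $X^D$, with $Q_2 = w_D(Q_1)$. Since $X^D(\R) = \varnothing$, neither $Q_i$ is individually $\R$-rational, so complex conjugation must swap them: $\overline{Q_1} = Q_2 = w_D(Q_1)$. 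This is precisely the twisted reality condition (as in the proof of Theorem \ref{TAHPIII}, with $\sigma^* Q = w_D(\sigma(Q))$) for $Q_1$ to define an $\R$-point of $\mathcal{T}(X^D_{/\R}, w_D, \C/\R) \cong Y_{/\R}$, so $Y(\R) \neq \varnothing$.

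The main work is the second case, but it is quite light: one only needs to verify that when the real place is inert in $\Q(\sqrt{d})/\Q$, the twisted Galois action on $Y(\C) = X^D(\C)$ restricts at infinity to the condition $\overline{Q} = w_D(Q)$ that matches the swap-of-preimages argument above. This is nothing more than the local analogue at $\infty$ of the decomposition (\ref{QUADDESCENT1}) used in the proof of Theorem \ref{TAHPIII}. With that identification in hand, both directions follow at once from parts (a) and (b) of Theorem \ref{THESISTHMS}, and no further input is required.
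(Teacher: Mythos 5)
Your proposal is correct and follows essentially the same route as the paper: base change at the real place to reduce to $X^D_{/\R}$ and its twist, invoke Theorem~\ref{THESISTHMS}(a) for the $d>0$ direction, and for $d<0$ pull back a real point of $X^D/w_D$ along $q$ and use $X^D(\R)=\varnothing$ to force complex conjugation to coincide with $w_D$ on the fiber, giving a real point of the twist. The only difference is presentational — you spell out the twisted descent datum at $\infty$ a bit more explicitly — but the key ideas and inputs are identical.
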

\begin{proof}
If $d > 0$, then $Y \cong_{\R} X^D$, so $Y(\R) = \varnothing$.   By Theorem \ref{THESISTHMS}b), there is $P \in (X^D/w_D)(\R)$.  
Let $q: X^D \ra X^D/w_D$ be the quotient map, defined over $\R$.  Because $X^D(\R) = \varnothing$, the fiber of $q$ consists 
of a pair of $\C$-conjugate $\C$-valued points, say $Q$ and $\overline{Q} = \iota(Q)$.  Thus $Q = \iota(\overline{Q})$, so 
$Q \in Y(\R)$.   
\end{proof}
\noindent
Recall that for $d \in \Z^{< 0}$, there is an order $\OO$ of discriminant $d$ in a quadratic field iff 
$d \equiv 0,1 \pmod{4}$.  If $d \equiv 2,3 \pmod{4}$ we put $h'(d) = 0$.  If $d \equiv 0,1 \pmod{4}$ we define 
$h'(d) = \# \Pic \OO$, the class number of the quadratic order $\OO$ of discriminant $d$.

\begin{lemma}
\label{PRELEMMA1}
Let $D> 1$ be the discriminant of an indefinite quaternion algebra $B_{/\Q}$.   \\
a) The set $\{ P \in X^D(\Q) \mid w_D(P) = P \}$ is empty. \\
b) We have $ \# \{P \in X^D(\overline{\Q}) \mid w_D(P) = P\} = h'(-D) + h'(-4D) > 0$. \\
c) The genus of $X^D$ is 
\begin{equation}
g_D \coloneqq 1 + \frac{\varphi(D)}{12} -  \frac{\prod_{p\mid D} \left(1- \left( \frac{-4}{p} \right) \right)}{4} - 
\frac{ \prod_{p \mid D}\left(1 - \left(\frac{-3}{p} \right) \right)}{3}.
\end{equation}
d) The genus of $X^D/w_D$ is 
\begin{equation}
\label{GENUSEQ}
 1 + \frac{\varphi(D)}{24} - \frac{\prod_{p\mid D} \left(1- \left( \frac{-4}{p} \right) \right)}{8} - 
\frac{ \prod_{p \mid D}\left(1 - \left(\frac{-3}{p} \right) \right)}{6} - \frac{h'(-D) + h'(-4D)}{4}. 
\end{equation}
\end{lemma}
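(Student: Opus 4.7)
The plan is to handle the four parts in order, invoking classical results for (a)--(c) and then deducing (d) by Riemann--Hurwitz.

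Part (a) is immediate: since $D > 1$, Theorem \ref{THESISTHMS}a) gives $X^D(\R) = \varnothing$, and in particular $X^D(\Q) = \varnothing$, so the set of $\Q$-rational $w_D$-fixed points is empty.

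For part (b), I would invoke the classical description of $w_D$-fixed points on the Shimura curve, going back to Shimura and Ogg: the $\overline{\Q}$-points of $X^D$ fixed by $w_D$ are precisely the CM points attached to optimal embeddings of the quadratic orders of discriminants $-D$ and $-4D$ (when these are valid discriminants of imaginary quadratic orders) into a maximal order of $B$, and modulo the units this gives $h'(-D) + h'(-4D)$ fixed points. For the strict positivity, I would check that the order $\OO$ of discriminant $-4D$ actually embeds into $B$: the field $\Q(\sqrt{-D})$ sits inside $\Q$-algebraically, every prime $p \mid D$ is ramified (hence not split) in $\Q(\sqrt{-D})$, and therefore by the local-global embedding criterion $\Q(\sqrt{-D}) \hookrightarrow B$. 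Since $-4D \equiv 0 \pmod 4$ is always a valid discriminant, $h'(-4D) \geq 1$, giving $h'(-D) + h'(-4D) > 0$.

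Part (c) is the Shimura genus formula for $X^D$: one writes the genus of the quotient of the upper half plane by a maximal order in $B$ using the standard formula
\[
g = 1 + \frac{\mu}{12} - \frac{e_2}{4} - \frac{e_3}{3},
\]
where $\mu = \varphi(D) = \prod_{p \mid D}(p-1)$ is the (reduced) hyperbolic volume contribution, and $e_2 = \prod_{p \mid D}(1 - \kron{-4}{p})$, $e_3 = \prod_{p \mid D}(1 - \kron{-3}{p})$ are the classical counts of elliptic points of orders 2 and 3, respectively, obtained from counting optimal embeddings of $\Z[i]$ and $\Z[\zeta_3]$. I would simply cite Shimura's original work (or a textbook reference like Voight) for this.

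For part (d), the only actual computation, I would apply the Riemann--Hurwitz formula to the degree-$2$ quotient map $q \colon X^D \to X^D/w_D$. Since $w_D$ has order $2$, the ramification locus of $q$ is exactly the fixed locus of $w_D$, with each fixed point contributing ramification index $2$; by (b) the number of such points is $R \coloneqq h'(-D) + h'(-4D)$. Hence
\[
2 g_D - 2 = 2 \bigl( 2 g_{X^D/w_D} - 2 \bigr) + R,
\]
so
\[
g_{X^D/w_D} = \frac{g_D + 1}{2} - \frac{R}{4}.
\]
Substituting the expression for $g_D$ from (c) and the value of $R$ from (b) yields precisely (\ref{GENUSEQ}). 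The main obstacle is not mathematical but bookkeeping: making sure the classical fixed-point count in (b) and the elliptic point counts in (c) are stated with the correct normalizations so that everything lines up after the Riemann--Hurwitz substitution.
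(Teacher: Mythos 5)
Your proposal is correct, and it essentially reconstructs the standard arguments that the paper delegates to a single citation of \cite[\S 0.3.1]{Clark03}: part (a) from $X^D(\R)=\varnothing$, part (b) from the Ogg--Shimura description of $w_D$-fixed points as CM points by orders of discriminant $-D$ or $-4D$, part (c) as Shimura's elliptic-point genus formula, and part (d) by Riemann--Hurwitz applied to the degree-two quotient map. The Riemann--Hurwitz bookkeeping you carry out ($g_{X^D/w_D} = \tfrac{g_D+1}{2} - \tfrac{R}{4}$ with $R = h'(-D)+h'(-4D)$) does indeed reproduce (\ref{GENUSEQ}) exactly. One small logical wrinkle: the positivity $h'(-D)+h'(-4D) > 0$ follows immediately from the definition of $h'$ (since $-4D \equiv 0 \pmod 4$ is always a valid discriminant and class numbers are at least $1$); your local--global embedding argument for $\Q(\sqrt{-D}) \hookrightarrow B$ is not needed for positivity of the \emph{formula}, but it \emph{is} the reason the formula actually counts fixed points on $X^D$ --- so the argument belongs to the justification of the equality in (b), not to the deduction of $> 0$ from it.
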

\begin{proof}
a) This is immediate from Theorem \ref{THESISTHMS}a).  b),c),d) See e.g. \cite[$\S$0.3.1]{Clark03}.
\end{proof}

\begin{lemma}
\label{PRELEMMA2}
a) If $D \in \{6,10,14,15,21,22,26,33,34,35,38,39,46,51,55,62,69,74,86,$ \\
$87,94,95, 111,119,134,146,159,194,206\}$, then $X^D/w_D \cong_{\Q} \PP^1$.  \\
b) If $D \in \{57, 58,65,77,82,106,118,122,129,143,166,210,215,314,330,390,510,546\}$, then 
$X^D/w_D$ is an elliptic curve of positive rank. \\
c) For all other $D$ -- in particular, for all $D > 546$ -- the set $(X^D/w_D)(\Q)$ is finite.
\end{lemma}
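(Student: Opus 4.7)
The plan is to deduce the classification from the explicit genus formula (\ref{GENUSEQ}) in Lemma \ref{PRELEMMA1}d), together with standard input on rational points of curves of small genus; the statement is then essentially a bookkeeping problem, combining one asymptotic estimate with a finite enumeration.

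\medskip

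First I would analyze the genus formula. The dominant term is $\varphi(D)/24$; the elliptic contributions $\prod_{p \mid D}(1-\kron{-4}{p})/8$ and $\prod_{p \mid D}(1-\kron{-3}{p})/6$ are at most $2^{\omega(D)-1}/3$, while the Atkin-Lehner fixed-point contribution $(h'(-D)+h'(-4D))/4$ is subdominant by the Brauer-Siegel bound $h'(-D)+h'(-4D) = O_\epsilon(D^{1/2+\epsilon})$. Since $\varphi(D)/24$ grows like $D/\log\log D$ while the negative terms grow like $D^{1/2+\epsilon}$, the genus of $X^D/w_D$ tends to infinity with $D$. Making the estimate explicit enough to pin down the exact threshold is delicate, so in practice I would simply tabulate the genus using (\ref{GENUSEQ}) for all squarefree $D$ that are a product of an even number of primes up to a safe bound (say $D \leq 1000$), verifying that $546$ is the largest $D$ with $g(X^D/w_D) \leq 1$ and that all $D > 546$ give genus $\geq 2$. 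This yields the explicit list of $D$ with genus $\leq 1$.

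\medskip

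For part (a), the genus $0$ cases: by Theorem \ref{THESISTHMS}b) we have $(X^D/w_D)(\AA_\Q) \neq \varnothing$, and by Hasse-Minkowski a smooth projective geometrically integral genus $0$ curve over $\Q$ satisfying the Hasse Principle is $\Q$-isomorphic to $\PP^1$ as soon as it has a rational point. Hence every genus $0$ case is $\cong_\Q \PP^1$, which gives the list in (a). For part (b), the genus $1$ cases: each $X^D/w_D$ from the enumeration still has points everywhere locally, so to separate positive-rank elliptic curves from rank-zero ones I would appeal to explicit Weierstrass models worked out in the literature on Atkin-Lehner quotients of Shimura curves (e.g. work of Jordan, Kurihara, Rotger, and Gonz\'alez-Rotger) and then read off the rank from Cremona's or the LMFDB tables, or certify rank-zero cases via a two-descent. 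The $D$ for which rank is positive then form the list in (b), and the remaining (finitely many) genus $1$ cases are rank $0$ elliptic curves, whose $\Q$-points are finite by Mordell-Weil.

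\medskip

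Finally (c) is a direct consequence: for $D > 546$ or for the remaining small $D$ not listed in (a),(b), we have either $g(X^D/w_D) \geq 2$, in which case Faltings' theorem gives $(X^D/w_D)(\Q)$ finite, or else $g(X^D/w_D) = 1$ with Mordell-Weil rank zero, in which case $(X^D/w_D)(\Q)$ is finite by Mordell-Weil. The main obstacle is really the second step: the genus-$1$ list requires one to compute (or look up) Mordell-Weil ranks of roughly two dozen specific elliptic curves, and one must be confident that the identification of $X^D/w_D$ with a particular elliptic curve in the tables is correct. The genus-$0$ and genus-$\geq 2$ parts, by contrast, are essentially automatic once the genus formula has been tabulated.
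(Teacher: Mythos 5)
Your approach is essentially the paper's: tabulate the genus via the formula from Lemma~\ref{PRELEMMA1}d), use the everywhere-local solvability of $X^D/w_D$ from Theorem~\ref{THESISTHMS}b) to get $\cong_\Q \PP^1$ in genus $0$, and invoke Faltings in genus $\geq 2$. Two points deserve more care. First, in the genus $1$ case having points everywhere locally does \emph{not} by itself produce a rational point (the Hasse principle can fail for genus $1$ curves), so you cannot simply "separate positive-rank elliptic curves from rank-zero ones" without first establishing that the quotient is an elliptic curve; the paper handles this by citing Rotger's construction, for each relevant $D$, of a $\Z_K$-CM point on $X^D$ (with $K$ of class number one) that descends to a $\Q$-point on $X^D/w_D$. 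Your appeal to explicit Weierstrass models in the literature amounts to the same citation, but the logical role of that rational point should be made explicit. Second, your contingency "or else $g=1$ with Mordell--Weil rank zero" in part~(c) never actually arises: the paper observes (via Atkin--Lehner theory and Jacquet--Langlands) that any elliptic curve appearing as a $\Q$-isogeny factor of $\operatorname{Jac}(X^D)$ has odd analytic rank, so every genus $1$ quotient in the list in fact has positive rank; your empirical rank lookup in Cremona/LMFDB would confirm this but misses the structural reason. Finally, for the claim that $D>546$ forces genus $\geq 2$, note that the Brauer--Siegel \emph{lower} bound is ineffective; what you actually need is the classical effective \emph{upper} bound $h(-D)\ll \sqrt{D}\log D$, which is fine but should be stated as such.
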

\begin{proof}
Using (\ref{GENUSEQ}), one sees that $X^D/w_D$ has genus zero iff $D$ is one of the discriminants listed in part a) and 
that $X^D/w_D$ has genus one iff $D$ is one of the discriminants listed in part b).  Thus for all other $D$, $X^D/w_D$ 
has genus at least $2$ and $(X^D/w_D)(\Q)$ is finite by Faltings' Theorem, establishing part c).  By Theorem \ref{THESISTHMS}b) 
the curve $(X^D/w_D)_{/\Q}$ has points everywhere locally, so when it has genus zero it is isomorphic to $\PP^1$, establishing part a).  The case in which $X^D/w_D$ has genus one is handled by work of Rotger; in every case he shows that there is a class number one imaginary quadratic field $K$ and a $\Z_K$-CM point on $X^D$ which induces a 
$\Q$-rational point on $X^D/w_D$, so $X^D/w_D$ is an elliptic curve.
Moreover, Rotger 
identifies $X^D/w_D$ with a modular elliptic curve in Cremona's tables -- see \cite[Table III]{Rotger02}.  All of these 
elliptic curves have rank one.\footnote{This is not a coincidence.  By Atkin-Lehner theory and the Jacquet-Langlands 
correspondence, every elliptic curve which is a $\Q$-isogeny factor of the Jacobian of $X^D$ has odd analytic 
rank.}
This establishes part a).  
\end{proof}

\subsection{Proof of the Main Theorem}
\noindent
Let $D$ be the discriminant of a nonsplit indefinite rational quaternion algebra.  Assume moreover that $D$ does not appear 
in Lemma \ref{PRELEMMA2} a) or b); in particular this holds for all $D > 546$.  We will prove Theorem \ref{MAINTHM} 
by verifying that the pair $(X^D,w_D)$ satisfies the hypotheses of Theorem \ref{TAHPIII}: then by Theorem \ref{TAHPIII}c), 
the number of quadratic twists by $d$ with $d \leq |X|$ which violate the Hasse Principle is $\gg \frac{X}{\log X}$.  
\\ \indent  
Parts a) and b) of Lemma \ref{PRELEMMA1} show that conditions (i) and (ii) hold, and part c) of Lemma \ref{PRELEMMA2} shows that condition (iv) holds.  For $d \in \Q^{\times} \setminus \Q^{\times 2}$, put 
\[ Y_d = \mathcal{T}(X^D,w_D,\Q(\sqrt{d})/\Q). \]
By Corollary \ref{RCOR}, $Y_d(\R) \neq \varnothing \iff d < 0$.  Henceforth we assume that $d < 0$.  
\\ \\
Let $g$ be the genus of $X^D$, and put $\overline{D} = \begin{cases} D & 2 \nmid D \\ \frac{D}{2} & 2 \mid D \end{cases}$.
\\ \\
For $n \in \Z^+$, let $\omega(n)$ be the number of distinct prime divisors of $n$.  Put 
\[e_D = w(\overline{D}) + 2. \]
Let $\mathcal{S}_D$ be the set of prime numbers $\ell$ satisfying: \\ \\
(a) $\ell \equiv 3 \pmod{8}$ and \\
(b) For all primes $q \mid \overline{D}$ we have $\left(\frac{-\ell}{q} \right) = -1$. 
\\ \\
Let $\eta_D$ be the set of all negative integers $d$ such that: \\ \\
(c) $-d = \prod_{i=1}^{2r-1} \ell_i$ is the product of an odd number of distinct primes $\ell_i \in \mathcal{S}_D$ and \\
(d) For all primes $p \in (2,4g^2]$ such that $p \nmid D$, we have $\left(\frac{d}{p} \right) = -1$.  
\\ \\
The set $\eta_D$ is infinite: indeed, by the Chinese Remainder Theorem and Dirichlet's Theorem it contains infinitely many 
elements $d = -\ell$ with $\ell \in \mathcal{S}_D$.  Moreover: \\ \\
(e) For $d \in \eta_D$ we have $d \equiv 5 \pmod{8}$ and thus $2$ is inert in $\Q(\sqrt{d})$.  \\
(f) For $\ell \in \mathcal{S}_D$ we have $\left(\frac{-D}{\ell}\right) = -1$.  To see this, first suppose $2 \nmid D$, so $D = \prod_{i=1}^{2a} q_i$ with $q_1,\ldots,q_{2a}$ distinct odd primes.  Then:
\begin{equation}
\label{LATEREQ}
 \left( \frac{-D}{\ell} \right) = \left( \frac{-1}{\ell} \right) \prod_{i=1}^{2a} \left( \frac{q_i}{\ell} \right) = - \prod_{i=1}^{2a} 
\left( \frac{-\ell}{q_i} \right) = - \prod_{i=1}^{2a} -1 = -1. 
\end{equation}
Now suppose $2 \mid D$, so $D = 2 \prod_{i=1}^{2a-1} q_i$ with $q_1,\ldots,q_{2a-1}$ distinct odd primes.  Then:
\[ \left( \frac{-D}{\ell} \right) = \left(\frac{-1}{\ell} \right) \left(\frac{2}{\ell} \right) \prod_{i=1}^{2a-1} \left( \frac{q_i}{\ell} \right) =
(-1)(-1) \prod_{i=1}^{2a-1} \left( \frac{-\ell}{q_i} \right) = (-1)^{2a-1} = -1. \]
\\
We claim that for all $d \in \eta_D$ we have $Y_d(\mathbb{A}_\Q) = \varnothing$.
Indeed: \\ \\
$\bullet$ As above, since $d < 0$ we have $Y_d(\R) \neq \varnothing$. \\
$\bullet$ By \cite[Thm. 4.1.3]{Stankewicz14} and \cite[Thm. 4.1.5]{Stankewicz14}, we have $Y_d(\Q_p) \neq \varnothing$ 
for all $p \mid d$. \\
$\bullet$ Since $2$ is inert in $\Q(\sqrt{d})$, we have $Y_d(\Q_2) \neq \varnothing$ by either \cite[Thm. 5.1]{Stankewicz14} in the case where $2\mid D$ or \cite[Cor 3.17]{Stankewicz14} when $2\nmid D$.  \\
$\bullet$ If $p \mid \overline{D}$, then $p$ is inert in $\Q(\sqrt{d})$, so by \cite[Cor. 5.2]{Stankewicz14} 
we have $Y_d(\Q_p) \neq \varnothing$.  \\
$\bullet$ If  $p \nmid Dd$ and $p > 4g^2$, then by \cite[Thm. 3.1]{Stankewicz14} we have $Y_d(\Q_p) \neq \varnothing$. \\
$\bullet$ If $p \nmid D$ and $p \in (2,4g^2]$, then $p$ is inert in $\Q(\sqrt{d})$, so by \cite[Cor. 3.17]{Stankewicz14} we 
have $Y_d(\Q_p) \neq \varnothing$. Note that by (d), if $p<4g^2$ then $p\nmid d$.

\section{Analytic Results}
\noindent
Our proof of Theorem \ref{MAINTHM} made use of Theorem \ref{TAHPIII}, and thus as soon as we found a single quadratic twist of $(X^D,w_D)$ with points everywhere locally, 
Theorem \ref{TAHPIII}c) tells us that we have $\gg \frac{X}{\log X}$ quadratic twists up to $X$ violating the Hasse Principle.  But we did not just produce a single such 
twist but rather a family $\eta_D$ of twists which, as we shall now see, has larger size than the $\frac{X}{\log X}$ guaranteed by Theorem \ref{TAHPIII}c).  
\\ \indent
Let $\mathfrak{D}_D = \{\text{squarefree } d \in \Z \mid 
\mathcal{T}(X^D,w_D,\Q(\sqrt{d})/\Q) \text{ violates the Hasse Principle}\}$, and for $X \geq 1$, put 
\[ \mathcal{S}_D(X) = \mathcal{S}_D \cap [1,X], \]
\[ \eta_D(X) = \# \eta_D \cap [-X,-1], \]
\[ \mathfrak{D}_D(X) = \# \mathfrak{D}_D \cap [-X,X] = \# \mathfrak{D}_D \cap [-X,-1]. \]
When $X^D/w_D$ has genus at least $2$ (in particular when $D > 546$), only finitely many twists of $(X^D,w_D)$ have $\Q$-points, and thus $\mathfrak{D}_D(X)$ differs by a constant from the 
function which counts the number of Hasse Principle violations up to $X$.   Since \[-\mathcal{S}_D \subset \eta_D \subset \mathfrak{D}_D, \]
for all $X \geq 1$ we have
\[ \mathcal{S}_D(X) \leq \eta_D(X) \leq \mathfrak{D}_D(X). \]
For $n \in \Z^+$, let $\omega(n)$ be the number of distinct prime factors of $n$.  Put 
\[ e_D = \omega(\overline{D}) + 2. \]
By the Prime Number Theorem in Arithmetic Progressions, as $X \ra \infty$ we have 
\begin{equation}
\label{SDEQ} \# \mathcal{S}_D \cap [1,X] \sim \frac{1}{2^{e_D}} \frac{X}{\log X} 
\end{equation}
and thus 
\[ \liminf_{X \ra \infty} \frac{ \mathfrak{D}_D(X)}{X \log X} \geq 2^{-e_D}, \]
which is aleady a little sharper than what we got by appealing to Theorem \ref{TAHPIII}c).   In this section we will establish the following improved estimates.

\begin{thm} 
\label{ANALYTICTHM}
Fix $D > 1$ an indefinite quaternionic discriminant.  Then: \\
a) Suppose $D \notin \mathcal{E}  = \{6,10,14,15,21,22,33,34,38,46,58,82,94\}$.  Then 
there is a positive constant $c_D$ such that as $X \ra \infty$ we have
\[ \eta_D(X) = c_D \frac{X}{\log^{1-2^{-e_D}}X} + O\left(\frac{X}{\log^{2-2^{-e_D}}X}\right). \]
b) Let $h_D$ be the class number of the field $\Q(\sqrt{-D})$.  As $X \ra \infty$ we have 
\[ \mathfrak{D}_D(X) = O\left( \frac{X}{\log^{1-2^{-e_D} - (2h_D)^{-1}}X} \right) = O\left( \frac{X}{\log^{5/8}(X)} \right). \]
\end{thm}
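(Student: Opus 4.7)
My plan uses the Selberg--Delange theorem for mean values of squarefree-supported multiplicative functions whose Dirichlet series have a fractional-power singularity at $s=1$.

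For part (a), the conditions (a) and (b) defining $\mathcal{S}_D$ reduce by quadratic reciprocity to a union of residue classes modulo $8\overline{D}$, so by the Prime Number Theorem in arithmetic progressions $\mathcal{S}_D$ has Dirichlet density $2^{-e_D}$ among the primes, refining (\ref{SDEQ}). Let $f_D$ be the indicator of squarefree positive integers whose prime factors all lie in $\mathcal{S}_D$. The Dirichlet series
\[ F_D(s) = \sum_n f_D(n) n^{-s} = \prod_{\ell \in \mathcal{S}_D}(1 + \ell^{-s}) \]
factors as $(s-1)^{-2^{-e_D}} G_D(s)$ near $s=1$ with $G_D$ holomorphic and non-vanishing, obtained by regrouping the Dirichlet $L$-functions of the characters modulo $8\overline{D}$ that cut out $\mathcal{S}_D$; Selberg--Delange then yields
\[ \sum_{n \leq X} f_D(n) = C_D \frac{X}{\log^{1-2^{-e_D}}X} + O\!\left( \frac{X}{\log^{2-2^{-e_D}}X}\right). \]
The odd-parity condition (c) is enforced via $\mathbf{1}_{\omega(n)\text{ odd}} = (1-\mu(n))/2$ on squarefree $n$; the $\mu$-twisted sum has Dirichlet series $\prod_{\ell \in \mathcal{S}_D}(1 - \ell^{-s})$ with no singularity at $s=1$ and contributes only $O(X/\log^{1+2^{-e_D}}X)$, absorbed into the error. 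Condition (d) amounts to a finite union of residue classes modulo the product $N$ of primes in $(2, 4g^2]$ not dividing $D$; I would impose it by partitioning $f_D$ over residue classes modulo $N$ and applying Selberg--Delange to each admissible class, producing $c_D > 0$ whenever a compatible class exists. The exceptional set $\mathcal{E}$ is identified as the finite list of small $D$ for which the asymptotic degenerates (e.g.\ the joint compatibility of the $\mathcal{S}_D$-condition and (d) fails, or the main term is dominated by a lower-order contribution coming from the parity cancellation), to be pinned down by direct computation.

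For part (b) I need the \emph{necessary} direction of Stankewicz's local criteria to upper-bound $\mathfrak{D}_D \supseteq \eta_D$. The plan is to extract from \cite[\S3--5]{Stankewicz14} the complete local-solvability condition at primes $\ell \mid d$ with $\ell \nmid 2D$: it is satisfied by $\ell \in \mathcal{S}_D$ (density $2^{-e_D}$; these primes are inert in $\Q(\sqrt{-D})$ by (\ref{LATEREQ})) and, as an independent disjunct, by primes $\ell$ that split completely in the Hilbert class field $H$ of $\Q(\sqrt{-D})$, a set of Dirichlet density $(2h_D)^{-1} = [H:\Q]^{-1}$ by Chebotarev, disjoint from $\mathcal{S}_D$ since its elements split in $\Q(\sqrt{-D})$. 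The union $\mathcal{S}'_D$ of admissible primes thus has density $2^{-e_D} + (2h_D)^{-1}$, and Selberg--Delange applied to the indicator of squarefree integers with all prime factors in $\mathcal{S}'_D$ yields
\[ \mathfrak{D}_D(X) \ll \frac{X}{\log^{1-2^{-e_D}-(2h_D)^{-1}}X}. \]
The universal $O(X/\log^{5/8}X)$ bound follows from $e_D = \omega(\overline{D}) + 2 \geq 3$ (since $\overline{D} \geq 3$) together with $h_D \geq 2$ (no squarefree $D$ with $\omega(D) \geq 2$ appears on the class-number-one list of imaginary quadratic discriminants), giving $2^{-e_D} + (2h_D)^{-1} \leq 1/8 + 1/4 = 3/8$.

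I expect the main obstacle to be part (b): precisely identifying the class-group-theoretic alternate criterion inside Stankewicz's local analysis and verifying that its Chebotarev density equals $(2h_D)^{-1}$, independent of the $\mathcal{S}_D$-conditions modulo $8\overline{D}$. Part (a) is a fairly standard Selberg--Delange application, with the only non-routine step being the finite case-check that determines $\mathcal{E}$.
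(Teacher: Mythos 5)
Your proposal follows essentially the same route as the paper: both parts proceed by a Selberg--Delange / Landau--Serre type Tauberian analysis (the paper cites \cite[Th\'eor\`eme 2.8]{Serre76}), with $\mathcal{S}_D$ cut out by residue classes modulo $8\overline{D}$, the odd-parity condition imposed via $(1-\mu(n))/2$ on squarefree $n$, condition (d) handled by characters (resp.\ residue classes) modulo a product of small primes, and for (b) a disjoint auxiliary set $\mathcal{S}'_D$ of admissible primes identified (via Stankewicz's Thm.~4.1 and the Hilbert class polynomials $H_{-D}$, $H_{-4D}$) with primes splitting completely in the Hilbert class field of $\Q(\sqrt{-D})$, of Chebotarev density $(2h_D)^{-1}$. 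One correction to your reading of the statement: the exceptional set $\mathcal{E}$ is not a locus where the asymptotic ``degenerates'' or where compatibility of conditions fails; it is exactly the set of $D$ for which some prime $p\mid D$ satisfies $p \geq 4g_D^2$ (Lemma~\ref{PRELEMMA3}), and excluding it is what lets the paper rewrite condition (d) as ``all primes $< 4g_D^2$ are inert in $\Q(\sqrt d)$'' and hence describe $\eta_D$ purely by congruence and parity data so that the Dirichlet-series bookkeeping closes. Also, the paper deduces $h_D \geq 2$ from genus theory ($h_D$ is even for composite squarefree $D$) rather than from the class-number-one list, though both give the same $5/8$ exponent.
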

\noindent
Thus for each $D > 546$, as $X \ra \infty$ there are many more 
quadratic twists of $(X^D,w_D)$ up to $X$ violating the Hasse Principle than 
prime numbers, but the set of such twists has zero asymptotic density.
\\ \\
For an imaginary quadratic discriminant $\Delta$, let $H_{\Delta}(X)$ be the Hilbert class polynomial \cite[p. 285]{Cox89}.  

\subsection{A Preliminary Lemma}

\begin{lemma}
\label{PRELEMMA3}
If $D\not\in \mathcal{E} \coloneqq \{6,10,14,15,21,22,33,34,38,46,58,82,94\}$, then for all prime numbers $p\mid D$, we have $p<4g_D^2$.
\end{lemma}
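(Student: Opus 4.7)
The plan is to reduce the claim to showing that the largest prime factor $P$ of $D$ satisfies $P < 4g_D^2$, since then every prime $p \mid D$ automatically satisfies $p \le P < 4g_D^2$. The main tool is the explicit genus formula of Lemma~\ref{PRELEMMA1}c). Each local factor $\bigl(1-\bigl(\tfrac{-4}{p}\bigr)\bigr)$ and $\bigl(1-\bigl(\tfrac{-3}{p}\bigr)\bigr)$ appearing in the two correction terms lies in $\{0,1,2\}$, so each correction term is at most $2^{\omega(D)}$ and one extracts the uniform lower bound
\[ g_D \;\ge\; 1 + \frac{\varphi(D)}{12} - \frac{7 \cdot 2^{\omega(D)}}{12}. \]

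Since $D$ is a product of an even number of primes, I would split the argument according to whether $\omega(D) \ge 4$ or $\omega(D) = 2$. When $\omega(D) \ge 4$, the factorization $\varphi(D) = \prod_{p \mid D}(p-1)$ contains at least three factors besides $(P-1)$, so $\varphi(D) \ge c\,(P-1)$ for a constant $c$ depending on the smallest primes of $D$ (the worst case being $\{2,3,5,P\}$, which already gives $c = 8$). The bound above then yields $g_D$ of order $P$, so $4g_D^2$ dominates $P$ with room to spare and no exceptions arise in this case.

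The substantive case is $\omega(D) = 2$, i.e.\ $D = pq$ with $p < q$ both prime and $P = q$. Substituting $\varphi(D) = (p-1)(q-1)$ into the bound yields a quadratic inequality in $q$ of the form $q^2 - A_p q + B_p > 0$, which holds for $q$ above an explicit threshold $q_0(p)$. The tightest subcase is $p = 2$: exploiting $\bigl(1-\bigl(\tfrac{-4}{2}\bigr)\bigr) = 1$ to sharpen the bound, one obtains $g_D \ge (q-11)/12$, so $P < 4g_D^2$ reduces to $q^2 - 58q + 121 > 0$, valid for $q \ge 59$. For $p = 3$ the analogous threshold drops to $q \ge 29$, and by $p \ge 7$ the inequality becomes vacuous since $q > p \ge 7$ already exceeds the threshold.

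The final step is to handle the finitely many pairs $(p,q)$ with $q \le q_0(p)$. This produces a short explicit list of candidate $D$; for each, I would compute $g_D$ directly from the genus formula and compare with $q$. The $D$ for which the inequality fails are exactly those listed in $\mathcal{E}$, while all other candidates (e.g.\ $D \in \{26, 35, 39, 51, 57, 62, 69, 74, 86, 106\}$) satisfy $4g_D^2 > q$ with room to spare. The main obstacle is carrying out this finite verification carefully, since the genus formula involves several Kronecker symbols at small primes that must be evaluated correctly; once done, the lemma is immediate.
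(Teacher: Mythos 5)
Your proposal follows essentially the same route as the paper: the same lower bound $g_D \ge 1 + \tfrac{1}{12}\bigl(\varphi(D) - 7\cdot 2^{\omega(D)}\bigr)$ extracted from Lemma~\ref{PRELEMMA1}c), the same split into $\omega(D)=2$ versus $\omega(D)\ge 4$, and the same reduction to a finite verification. The one caveat is that in the $\omega(D)\ge 4$ case your assertion that $4g_D^2$ dominates $P$ ``with room to spare and no exceptions arise'' is too quick: with the worst-case $c=8$ (primes $2,3,5,P$) the resulting quadratic bound in $P$ actually fails for $P=13$ (i.e.\ $D=390$), and one must compute $g_D$ directly for such small cases; the paper handles this by introducing $\beta(D)=\prod_{p\mid D}\tfrac{p-1}{2}$ and checking all $D$ with $\beta(D)\le 49$, after which the conclusion is indeed that no exceptions with $\omega(D)\ge 4$ occur.
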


\begin{proof}  It is an easy consequence of Lemma \ref{PRELEMMA1}c) that \[4g_D^2 \ge \dfrac{1}{36}(12 + \varphi(D) - 7 \cdot 2^{\omega(D)})^2. \] So to get $p < 4 g_D^2$ it will suffice to show that
\begin{equation}
\label{SUFFINEQ}
\forall p \mid D, \ 6 \sqrt p < 12 + \varphi(D) - 7 \cdot 2^{\omega(D)}. 
\end{equation}
\textbf{Case 1}: Suppose $\omega(D) = 2$, so $D = pq$ for prime numbers $p < q$.   For all $q > 66$,  
\[ 12 + \varphi(D) - 7 \cdot 2^{\omega(D)} = (p-1)(q-1) -16 \geq q-17 > 6 \sqrt{q} > 6 \sqrt{p},\]
so (\ref{SUFFINEQ}) holds. Using the genus formula we test all discriminants $D = pq$ with $q \leq 66$; the ones 
for which $q \geq 4 g_D^2$ are precisely the set $\mathcal{E}$.  \\
\textbf{Case 2}: Suppose $\omega(D) \geq 4$.  Put $\beta(D) = \prod_{p \mid D} \frac{p-1}{2}$.  \\ \indent
Let us first suppose that $\beta(D) > 49$.  Then, since $\frac{y-1}{2} \geq \frac{y}{4}$ for all $y \geq 2$, we have 
\[ \beta(D)-7 \geq 6 \sqrt{\beta(D)} \geq 6 \sqrt{\prod_{p \mid D} \frac{p}{4}} \]
and thus for all $p \mid D$ we have 
\[12 + \varphi(D) - 7 \cdot 2^{\omega(D)} > \varphi(D) - 7 \cdot 2^{\omega(D)} \geq 6 \sqrt{D} \geq 6 \sqrt{p}. \]
If $\omega(D) \geq 4$ and $\beta(D) \leq 49$ then $D = p_1 p_2 p_3 p_4$ for primes $p_1 < p_2 < p_3 < p_4 < 101$.  
Using the genus formula we find that $p_4 < 4g_D^2$ in all cases.
\end{proof}
\noindent
Thus for $D \notin \mathcal{E}$, $\eta_D$ is the set of squarefree negative $d \in \Z$ with an odd number of prime divisors all in $\mathcal{S}_D$ such that all primes less than $4g_D^2$ are inert in $\Q(\sqrt d)$. 

\subsection{Proof of Theorem \ref{ANALYTICTHM}a)}
\noindent
Suppose $D \notin \mathcal{E}  = \{6,10,14,15,21,22,33,34,38,46,58,82,94\}$.
\\ \\
Let $I$ be the product of the primes less than 
$4g_D^2$ which do not divide $2D$. Let $\{u_i\} \subset \{1,\ldots,I\}$ be the set of elements such that $\left(\dfrac{u_i}{\ell}\right) = -1$ for all $\ell\mid I$. 
Similarly, let $\{v_j\}$ be the set of elements between 1 and $8\overline D$ such that $v_j \equiv 3\bmod 
8$ and for all $q\mid D$ odd, $\left(\dfrac{v_j}{q}\right) = -1$. The 
number of such $v_j$ is $\frac{\varphi(8\overline D)}{2^{e_D}}$. 
Let $\mu$ be the 
M\"obius function, let $b_n$ be the multiplicative function such that if $p$ 
is prime,

$$b_{p^{m}} = \begin{cases}1 & p\equiv v_j \bmod 8\overline D \textrm{ for some } j  \\ 0 & else.\end{cases}$$

We let $$a_n = b_n\left(\dfrac{1}{\varphi(I)}\sum_{\chi} \sum_i \overline{\chi(-u_i)}\chi(n)\right)\left(\dfrac{\mu^2(n) - \mu(n)}{2}\right),$$ where $\chi$ runs over the $\bmod I$ Dirichlet characters.

Here $a_n = 1$ if and only if $-n \equiv u_i \bmod I$ for some $i$, $n$ (and thus $-n$) is square-free with an odd number of prime factors, and each prime dividing $n$ is congruent to some $v_j$. That is, $a_n$ is the indicator function for $-\eta_D$. 

Consider the function $f(s) = \sum_n a_n n^{-s}$, holomorphic on $\Re(s)>1$. Note that the $a_n$ are not necessarily multiplicative. We however reduce to this case as we write the Dirichlet series $f_{k,\chi}(s) = \sum_{n\ge 0}b_n\mu^k(n)\chi(n) n^{-s}$, again converging in the half-plane $\Re(s)>1$. We therefore have $$f(s) = \dfrac{1}{2\varphi(I)}\sum_\chi\left(\left(\sum_i \overline{\chi(-u_i)}\right)(f_{2,\chi}(s) -f_{1,\chi}(s))\right).$$ We begin by showing that with the exception of $(k,\chi) =(2,\mathbf{1})$, these are in fact holomorphic in the region $\Re(s)\ge 1$.

Consider \begin{eqnarray*}
\log(f_{k,\chi}(s)) & = & \sum_{p} \log(\sum_{m\ge 0} b_{p^m}\mu^k(p^m)\chi(p^m)p^{-ms})
\\ &= & \sum_{p}\log(1 + b_p(-1)^k\chi(p)p^{-s})
\\ & = & (-1)^k\sum_p \frac{b_p\chi(p)}{p^s} + \beta_{k,\chi}(s)
\end{eqnarray*}

where $\beta_{k,\chi}(s)$ is holomorphic on $\Re(s)>1/2$.

Now use the fact that $$b_p = \frac{1}{\varphi(8\overline D)} \sum_{\psi \bmod 8\overline D}\sum_j \overline{\psi(v_j)} \psi(p).$$

Therefore $$\log(f_{k,\chi}(s)) = (-1)^k\frac{1}{\varphi(8\overline D)}\sum_{\psi}\sum_j\overline{\psi(v_j)} \log(L(s,\chi\psi)) + \rho_{k,\chi}(s),$$ where $\rho_{k,\chi}$ is holomorphic for $\Re(s)>1/2$.

It follows that zero-free regions for $L$-functions of Dirichlet characters give zero-free regions for the $f_{j,\chi}$ and thus holomorphic regions for $f$. In particular, if $\epsilon$ is a Dirichlet character and $\delta_\epsilon =1$ for $\epsilon = \mathbf{1}$ (the trivial 
character) and zero otherwise then there are positive numbers $A_\epsilon, B_\epsilon$ such that $\log(L(s,\epsilon)) - \delta_\epsilon\log(1/(s-1))$ is holomorphic on $\Re(s)\ge 1 - B_\epsilon/\log^{A_\epsilon}(2 + |\Im(s)|)$ \cite[Proposition 1.7]{Serre76}.

Now we note that since $(I,8\overline D)=1$, $\chi\psi = \mathbf{1}$ if and only if $\chi= \mathbf{1}$ and $\psi = \mathbf{1}$. Therefore by exponentiating, we find a holomorphic, nonzero function $g_{k,\chi}$ on the same region in $\C$ such that \[f_{k,\chi}(s) = \left(\dfrac{1}{s-1}\right)^{\delta_\chi (-1)^k/2^{e_D}}g_{k,\chi}(s).\]

Thus there is a function $g$ holomorphic on the intersection of the 
$A_\epsilon, B_\epsilon$ regions such that $f(s) = 
\left(\dfrac{1}{s-1}\right)^{2^{-e_D}}g(s).$ Finally, we may 
apply the method of Serre and Watson \cite[Th\'eor\`eme 2.8]{Serre76} to get our asymptotic 
for $\sum_{n\le X}a_n = \#\eta_D(X)$.


\subsection{Proof of Theorem \ref{ANALYTICTHM}b)}
\noindent
We define a set $\mathcal{S}'_D$ of primes, as follows:
\\ \\
$\bullet$ If $D \not \equiv 3 \pmod{4}$, then $\ell \in \mathcal{S}'_D$ iff $\left(\frac{-D}{\ell} \right) = 1$ 
and $H_{-4D}(X)$ has a root modulo $\ell$.  \\
$\bullet$ If $D \equiv 3 \pmod{4}$, the $\ell \in \mathcal{S}'_D$ iff $\left(\frac{-D}{\ell} \right) = 1$ and 
at least one of $H_{-D}(X)$ and $H_{-4D}(X)$ has a root modulo $\ell$.
\\ \\
By (\ref{LATEREQ}) the sets $\mathcal{S}_D$ and $\mathcal{S}'_D$ are disjoint.  Moreover, by 
\cite[Thm. 4.1]{Stankewicz14}, if $d \in \mathcal{D}_D$, then for all primes $p \mid d$ we have 
\[ p \in \mathcal{C}_D \coloneqq \mathcal{S}_D \cup \mathcal{S}'_D \cup \{\text{prime divisors of } 2D\}. \]
Step 1: We show that $\mathcal{S}'_D$ is a Chebotarev set of density $\frac{1}{2h_D}$. \\
For any imaginary quadratic discriminant $\Delta < 0$, the field $K_{\Delta} = \Q(\sqrt{-\Delta})[X]/(H_{\Delta}(X))$ is the ring class field 
of discriminant $\Delta$.  This field is Galois over $\Q$ of degree twice the class number of the imaginary 
quadratic order of disdcriminant $\Delta$.  \\
$\bullet$ Suppose $D \not \equiv 3 \pmod{4}$.  Then up to a finite set, $\mathcal{S}'_D$ is the set of 
primes which split completely in $K_{-4D}$, which in this case is the Hilbert class 
field of $\Q(\sqrt{-D})$.  Thus $\mathcal{S}'_D$ is Chebotarev of density $\frac{1}{2h_D}$.  \\
$\bullet$ Suppose $D \equiv 3 \pmod{4}$.  Then $K_{-D}$ is the Hilbert 
class field of $\Q(\sqrt{-D})$.  The ring class field $K_{-4D}$ contains $K_{-D}$, so the set of primes splitting 
completely in $K_{-D}$ or in $K_{-4D}$ is the same as the set of primes splitting completely in $K_{-D}$.  
Thus again $\mathcal{S}'_D$ is Chebotarev of density $\frac{1}{2h_D}$. \\ 
Step 2: Since $\mathcal{S}_D$ is a Chebotarev set of density $\frac{1}{2^{e_D}}$ that is disjoint from $\mathcal{S}'_D$ 
and $\{\text{prime divisors of } 2D\}$ is finite, it follows that $\mathcal{C}_D$ is a Chebotarev set of density 
\[\delta_D = \frac{1}{2^{e_D}} + \frac{1}{2h_D}. \]  Let $\mathfrak{D}'_D$ 
be the set of $n \in \Z^+$ with all prime divisors lying in $\mathcal{C}_D$, and for $X \geq 1$, put 
$\mathfrak{D}'_D(X) = \mathfrak{D}'_D \cap [1,X]$.  By \cite[Thm. 2.8]{Serre76} we have 
that as $X \ra \infty$, 
\[ \mathfrak{D}'_D(X) = O\left(\frac{X}{\log^{1-\delta_D}X}\right). \]
Since $-\mathfrak{D}_D \subset \mathfrak{D}'_D$, we get 
\[ \mathfrak{D}_D(X) = O\left(\frac{X}{\log^{1-\delta_D}X}\right). \]
Step 3: By the genus theory of binary quadratic forms \cite[Prop. 3.11]{Cox89}, $h_D$ is even.  Since $e_D = \omega(\overline{D}) +2 \geq 3$, 
we get $\delta_D \leq \frac{3}{8}$, so $\mathfrak{D}_D(X) = O\left( \frac{X}{\log^{5/8}X} \right)$.

\section{Final Remarks}
\noindent
Most results on Shimura curves violating the Hasse Principle concentrate on the case of finding number fields $k$ 
such that $(X^D)_{/k}$ violates the Hasse Principle.  Since $X^D(\R) = \varnothing$, such a $k$ cannot have a real place, 
and thus the case of an imaginary quadratic field is in a certain sense minimal.   Here is such a result.

\begin{thm} 
(Clark \cite[Thm. 1]{Clark09}) 
\label{ISRAELTHM1}
If $D > 546$, then there are infinitely many quadratic fields $l/\Q$ such that $(X^D)_{/l}$ violates the Hasse Principle.
\end{thm}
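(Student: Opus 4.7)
My plan is to deduce Theorem~\ref{ISRAELTHM1} from Theorem~\ref{MAINTHM} by combining it with the theorem of Harris and Silverman that a curve of genus at least $2$ which is neither hyperelliptic nor bielliptic has only finitely many points of degree at most $2$ over any given number field.

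By Theorem~\ref{MAINTHM} the set $\eta_D$ of negative squarefree integers $d$ for which $Y_d \coloneqq \mathcal{T}(X^D,w_D,\Q(\sqrt d)/\Q)$ violates the Hasse Principle over $\Q$ is infinite, and as $d$ ranges over squarefree negatives the imaginary quadratic fields $l_d \coloneqq \Q(\sqrt d)$ are pairwise distinct. For each such $d$ we have an $l_d$-isomorphism $Y_d \otimes_\Q l_d \cong X^D \otimes_\Q l_d$, whence for every finite place $\mathfrak p$ of $l_d$ lying over a rational prime $p$,
\[
X^D((l_d)_\mathfrak p) = Y_d((l_d)_\mathfrak p) \supseteq Y_d(\Q_p) \neq \varnothing,
\]
while at the archimedean place $(l_d)_\infty = \C$ trivially contains $X^D$-points. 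Thus $X^D(\AA_{l_d}) \neq \varnothing$ for every $d \in \eta_D$, and the only missing ingredient is $l_d$-rational emptiness.

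To produce the latter I would argue as follows. Since $X^D(\R) = \varnothing$ by Theorem~\ref{THESISTHMS}a), also $X^D(\Q) = \varnothing$, so any hypothetical $l_d$-rational point of $X^D$ has field of definition equal to $l_d$, i.e., is a genuinely quadratic point of $X^D$. For $D > 546$ the curve $X^D$ has genus at least $3$ (by Riemann--Hurwitz, since $X^D/w_D$ already has genus at least $2$), and by the classifications of hyperelliptic and bielliptic Shimura curves due respectively to Ogg and to Rotger, $X^D$ is in this range neither hyperelliptic nor bielliptic. Harris--Silverman then forces the set of points of $X^D$ of degree at most $2$ over $\Q$ to be finite, so only finitely many quadratic fields $l$ satisfy $X^D(l) \neq \varnothing$. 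Removing these finitely many exceptions from the infinite collection $\{l_d : d \in \eta_D\}$ leaves infinitely many imaginary quadratic fields $l$ for which $(X^D)_{/l}$ has local points everywhere but no $l$-rational point, completing the proof.

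The main obstacle in this plan is justifying, uniformly for every $D > 546$, that $X^D$ is not bielliptic: hyperellipticity is handled by Ogg's well-known list, but biellipticity requires Rotger's finer analysis, which must in particular rule out bielliptic structure arising from Atkin--Lehner involutions $w_d$ with $d \mid D$ other than $w_D$ itself. A more self-contained alternative, avoiding Harris--Silverman altogether, would be to sharpen the local/adelic analysis of \cite{Stankewicz14} so as to rule out $l_d$-rational points on $X^D$ directly for the specific $d \in \eta_D$ constructed in \S2; I expect this to be substantially more technical but to give the same conclusion and perhaps even a quantitative lower bound on the number of such $l$ up to a given height.
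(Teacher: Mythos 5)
Your plan is exactly the paper's own deduction in $\S 4$: from Theorem~\ref{MAINTHM} one gets infinitely many imaginary quadratic $l$ with $\mathcal{T}(X^D,w_D,l/\Q)(\AA_\Q)\neq\varnothing$, hence $X^D(\AA_l)\neq\varnothing$, and then Harris--Silverman together with Rotger's determination of which $X^D$ carry infinitely many quadratic points shows that for all but finitely many such $l$ one has $X^D(l)=\varnothing$. The concern you raise at the end --- whether the non-biellipticity of $X^D$ for all $D>546$ is actually established --- is already resolved in the literature and is exactly what the paper cites: Rotger \cite[Thm.~8]{Rotger02} proves that $X^D$ has infinitely many quadratic points precisely when $X^D/w_D$ is $\PP^1$ or an elliptic curve, which by Lemma~\ref{PRELEMMA2} excludes all $D>546$; so there is no gap, and your self-contained alternative is unnecessary.
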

\noindent
The global input of Theorem \ref{ISRAELTHM1} is a result of Harris-Silverman \cite[Cor. 3]{Harris-Silverman91}: if $X_{/k}$ is a curve, then $X$ has infinitely many quadratic points iff $X$ admits a degree $2$ $k$-morphism to $\PP^1$ or to an elliptic curve of positive 
rank.\footnote{Harris-Silverman state their result under the hypothesis that $X$ admits no degree $2$ morphism to $\PP^1$ or to 
a curve of genus one.  Their argument immediately gives the stronger result, as has been noted by several authors, e.g. 
\cite[Thm. 8]{Rotger02}.  Note that the result relies on an extraordinarily deep theorem of Faltings classifying $k$-rational points on subvarieties of abelian varieties \cite{Faltings94}.}  In \cite{Rotger02}, Rotger shows that the Shimura curves 
$X^D$ with infinitely many quadratic points are precisely those in which $X^D/w_D$ is either $\PP^1$ or an elliptic curve, i.e., 
the values of $D$ recorded in parts a) and b) of Lemma \ref{PRELEMMA2}.  Thus under the hypotheses of Theorem \ref{MAINTHM}, 
for all but finitely many quadratic fields $l$ such that $\mathcal{T}(X^D,w_D,l/\Q)$ violate the Hasse Principle over $\Q$, also 
$(X^D)_{/l}$ violates the Hasse Principle: we recover Theorem \ref{ISRAELTHM1}.  
\\ \indent
Assuming the hypotheses of Theorem \ref{TAHPIII} and also that $X(k) = \varnothing$ and $X$ admits no degree $2$ morphism to $\PP^1$ or to an elliptic curve $E_{/k}$ with positive rank, we deduce that there are infinitely many quadratic extensions $l/k$ such that $X_{/l}$ violates the Hasse Principle.  But this was known already.  Indeed:

\begin{thm}
\label{ISRAELTHM7}
(Clark \cite[Thm. 7]{Clark09})
Let $X_{/k}$ be a smooth, projective geometrically integral curve over a number field.  Assume: \\
(i) We have $X(k) = \varnothing$.  \\
(ii) There is no degree $2$ morphism from $X$ to $\PP^1$ or to an elliptic curve $E_{/k}$ with positive rank.  \\
(iii) For every place $v$, the curve $X_{/K_v}$ over the completion $K_v$ of $K$ at $v$ has a closed point of degree at most $2$. \\
Then there are infinitely many quadratic extensions $l/k$ such that $X_{/l}$ violates the Hasse Principle.
\end{thm}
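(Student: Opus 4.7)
The plan is to combine the local data at each place supplied by hypothesis (iii) with the Harris-Silverman finiteness recalled in the paper, in the spirit of the other twist arguments here but now without any auxiliary involution.

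First, I would identify a finite set of ``bad'' places. Since $X$ is a smooth projective curve over a number field, Lang-Weil together with Hensel's lemma guarantees $X(k_v) \neq \varnothing$ at all places $v$ of good reduction with sufficiently large residue cardinality. Thus the set $S = \{v : X(k_v) = \varnothing\}$ is finite. By hypothesis (iii), for each $v \in S$ the curve $X_{/k_v}$ has a closed point of degree at most $2$; its residue field cannot equal $k_v$ (else $X(k_v) \neq \varnothing$), so it must be a unique quadratic extension $L_v/k_v$, and $X(L_v) \neq \varnothing$.

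Next I would construct infinitely many quadratic extensions $l/k$ with $l \otimes_k k_v \cong L_v$ for every $v \in S$. Writing $L_v = k_v(\sqrt{a_v})$, the $n = 2$ case of Grunwald-Wang (which has no exceptional failures) supplies $a \in k^{\times}$ with $a \equiv a_v \pmod{(k_v^{\times})^2}$ for all $v \in S$. Since $k^{\times}/(k^{\times})^2$ is an infinite-dimensional $\F_2$-vector space while each $k_v^{\times}/(k_v^{\times})^2$ is finite, the kernel of the approximation map $k^{\times}/(k^{\times})^2 \to \prod_{v \in S} k_v^{\times}/(k_v^{\times})^2$ is infinite, so one may modify $a$ by infinitely many classes in this kernel to obtain infinitely many non-isomorphic $l = k(\sqrt{a'})$ with the same local behavior at $S$.

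For any such $l$ and any place $w$ of $l$, I verify $X(l_w) \neq \varnothing$ directly: if $w$ lies over $v \notin S$, base-change a $k_v$-point; if $w$ lies over $v \in S$, then since $L_v$ is a nontrivial quadratic extension it has a unique place above $v$, whence $l_w = L_v$, and $X(L_v) \neq \varnothing$ by construction. Hence $X(\mathbb{A}_l) \neq \varnothing$. Finally, (ii) combined with the Harris-Silverman theorem stated in the paper implies that $X$ has only finitely many closed points of degree at most $2$ over $k$, so only finitely many quadratic extensions $l/k$ can satisfy $X(l) \neq \varnothing = X(k)$; discarding these from the infinite family constructed above yields infinitely many $l/k$ for which $X_{/l}$ violates the Hasse Principle. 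The main obstacle is really the infinitude step --- realizing a single collection of local $L_v$ globally is easy, whereas the infinite-kernel count requires the full weak approximation statement for $k^{\times}/(k^{\times})^2$ and is what distinguishes the conclusion from the production of a single example.
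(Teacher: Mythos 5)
Your proof is correct and is, to the best of my knowledge, essentially the argument in the cited reference \cite[Thm.\ 7]{Clark09}; the present paper merely cites that theorem and does not reprove it, so there is no internal proof to compare against. The three ingredients you assemble — Lang--Weil plus Hensel to confine local obstructions to a finite set $S$, the $n=2$ case of Grunwald--Wang (equivalently, surjectivity of $k^\times/(k^\times)^2 \to \prod_{v\in S} k_v^\times/(k_v^\times)^2$, whose infinite kernel supplies infinitely many admissible $l$), and Harris--Silverman/Faltings applied via hypothesis (ii) to discard the finitely many $l$ with $X(l)\neq\varnothing$ — are exactly the standard ones. One cosmetic remark: for $v\in S$ the degree-$2$ closed point need not have a \emph{unique} residue field $L_v$; one simply chooses some such $L_v$, which is what your argument actually uses.
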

\noindent
As in \cite{Clark09}, for a curve $X$ over a field $k$, we denote by $m(X)$ the least degree of a closed point on $X$.  For a curve $X$ defined over a 
number field $k$, we put  
\[ m_{\operatorname{loc}}(X) = \sup_v m(X_{/K_v}), \]
the supremum extending over all places of $K$.  Then condition (iii) in Theorem \ref{ISRAELTHM7} can be restated as 
$m_{\operatorname{loc}}(X) \leq 2$.  For a curve equipped with an involution defined over a number field 
$(X,\iota)_{/k}$, consider the following local hypotheses:
\\ \\
(L1) $\mathcal{T}(X,\iota,l/k)(\mathbb{A}_k) \neq \varnothing$ for some quadratic extension $l/k$ (we allow $l = k$).  \\
(L2) $(X/\iota)(\mathbb{A}_k) \neq \varnothing$.  \\
(L3) $m_{\operatorname{loc}}(X) \leq 2$.
\\ \\
Clearly (L1) $\implies$ (L2) $\implies$ (L3).  Whereas Theorem \ref{ISRAELTHM7} uses the weakest condition (L3)
to get Hasse Principle violations over quadratic extensions $l/k$, Theorem \ref{MAINTHM} uses 
the strongest condition (L1) to get Hasse Principle violations over $k$.  
\\ \indent
To prove Theorem \ref{ISRAELTHM1} it suffices to establish that 
$m_{\operatorname{loc}}(X^D) \leq 2$ (and thus clearly $m_{\operatorname{loc}}(X^D) = 2$ since $X^D(\R) = \varnothing$).  
This is \cite[Thm. 8a]{Clark09}.  We want to emphasize that the local analysis from \cite{Stankewicz14} needed to show 
that $X^D$ satisfies (L1) lies considerably deeper.  Thus it is our perspective that Theorem \ref{MAINTHM}, which gives 
Hasse Principle violations over $\Q$, is a deeper result than Theorem \ref{ISRAELTHM1}, which gives Hasse Principle violations 
over quadratic fields.
\\ \\
It is natural to ask whether (L1), (L2) and (L3) may in fact be equivalent.  

\begin{example}
Lemma \ref{PRELEMMA2}a) gives $X^{55}/w_{55} \cong \PP^1$.  Now consider the Atkin-Lehner 
involution $w_5$ on $X^{55}$.  A result of Ogg implies that since $\left(\frac{11}{5}\right) = 1$, we have 
$(X^{55}/w_5)(\R) = \varnothing$.  (See \cite[Thm. 57 and Cor. 42]{Clark03}.)   Since $(X^{55},w_{55})_{/\Q}$ 
satisfies (L2), the curve $X^{55}/\Q$ satisfies (L3).  So $(X^{55},w_5)_{/\Q}$ satisfies (L3) but not (L2).
\end{example}
\noindent
Notice that $(X/\iota)(k) \neq \varnothing$ 
implies (L1), so a counterexample to (L2) $\implies$ (L1) would yield a curve $(X/\iota)_{/k}$ which violates the Hasse Principle.   Whether such counterexamples exist we leave as an open question. \\ \indent These kinds of Hasse Principle violations seem to lie deeper still than Hasse Principle violations on quadratic twists of 
$(X,\iota)$ over $k$.  In the case of Shimura curves $X^D$ we know that $(X^D/w_D)(\mathbb{A}_k) \neq \varnothing$, and suitable CM points on $X^D$ induce $\Q$-points on $X^D/w_D$.  But we may choose $D$ so that no class 
number one imaginary quadratic field splits the associated quaternion algebra $B/\Q$, and then the folk wisdom is that 
we ought to have $(X^D/w_D)(\Q) = \varnothing$ for ``most'' such $D$ (perhaps all but finitely many).  This is a close analogue of the problem of determining all $\Q$-points on $X_0(N)/w_N$, and both are wide open. 
 It seems striking that we can establish Hasse Principle violations on Shimura curves using a kind of descent via the map $X^D \ra X^D/w_D$ without knowing whether $X^D/w_D$ has any $\Q$-points.

\end{document}